\newtheorem{sat}{Theorem}[section]		
\newtheorem{lem}[sat]{Lemma}
\newtheorem*{defi*}{Definition}			
\newtheorem*{bei*}{Example}
\newtheorem*{sat*}{Theorem}				
\newtheorem*{kor*}{Corollary}
\newtheorem*{rmk*}{Remark}				
\newtheorem*{quest*}{Question}
\let\ssection=\section
\renewcommand{\section}{\setcounter{equation}{0}\ssection}
\newtheorem*{namedtheorem}{\theoremname}
\newcommand{\theoremname}{testing}
\theoremstyle{remark}
\newtheorem*{bem}{Remark}
\newtheorem{claim}{Claim}
\newtheorem*{namedtheoremr}{\theoremnamer}
\newcommand{\theoremnamer}{testing}
\newcommand{\BR}{\mathbb R}			
\newcommand{\BN}{\mathbb N}			
			\newcommand{\BZ}{\mathbb Z}
\newcommand{\CA}{\mathcal A}		
		\newcommand{\CF}{\mathcal F}
		\newcommand{\CX}{\mathcal X}
\newcommand{\CY}{\mathcal Y}
\newcommand{\D}{\partial}
\DeclareMathOperator{\SL}{SL}		
\DeclareMathOperator{\GL}{GL}		
\DeclareMathOperator{\Id}{Id}		
\newcommand{\comment}[1]{}
\DeclareMathOperator{\Stab}{Stab}
\DeclareMathOperator{\Fix}{Fix}
\DeclareMathOperator{\Aff}{Aff}
\newcommand{\fsubd}{\mathrel{{\scriptstyle\searrow}\kern-1ex^d\kern0.5ex}}
\newcommand{\bsubd}{\mathrel{{\scriptstyle\swarrow}\kern-1.6ex^d\kern0.8ex}}
\begin{document}

\title[]{Cyclic hyperbolic Veech groups in finite area}
\author{Anna Lenzhen}
\address{IRMAR, Universit\'e de Rennes 1}
\email{anna.lenzhen@univ-rennes1.fr}
\author{Juan Souto}
\address{IRMAR, Universit\'e de Rennes 1}
\email{juan.souto@univ-rennes1.fr}
\thanks{This material is based upon work supported by the National Science Foundation under Grant No. DMS-1440140 while the authors were in residence at the Mathematical Sciences Research Institute in Berkeley, California, during the Fall 2016 semester.}

\begin{abstract}
We prove that there are finite area flat surfaces whose Veech group is an infinite cyclic group consisting of hyperbolic elements.
\end{abstract}
\maketitle

\section{}

A translation surface is an orientable 2-dimensional manifold $X$ with an atlas such that, off a discrete set of points called singularities, transition maps are translations in $\BR^2$. Note that the regular part of such a surface $X$, that is the complement of the set of singularities, is canonically endowed with a flat metric. An affine homeomorphism $f:X\to X$ of a translation surface is an orientation preserving self-homeomorphism which, in charts, is the restriction of an affine self-map of $\BR^2$. Every affine homeomorphism has a well-defined linear part, its differential $Df\in\GL_2\BR$, and if we denote by $\Aff(X)$ the group of affine homeomorphisms we have the homomorphism
$$\Aff(X)\to\GL_2^+\BR,\ \ f\mapsto Df.$$
The image of this homomorphism is the {\em Veech group} of $X$. 

The Veech group of a flat surface is in general trivial but can also be rather large. For instance, so called square tiled surfaces, such as the square torus, have Veech groups commensurable to $\SL_2\BZ$. Veech himself \cite{Veech} constructed other examples of translation surfaces whose Veech group is a lattice in $\SL_2\BR$, and it is altogether an interesting question to figure out what kind of groups can arise as Veech groups of translation surfaces. 

If there are no restrictions on the translation surface $X$, then there is pretty much a complete answer to this problem. For example, it is proved in \cite{Veech infinite} that every countable subgroup of $\GL_2^+\BR$ which does not contain contracting elements is the Veech group of some translation surface. However, once one starts limiting the class of surfaces under consideration new restrictions arise on which subgroups  can be  Veech groups. For example, if the surface $X$ in question has finite area with respect to the canonical flat riemannian metric, then the Veech group is contained in $\SL_2\BR$. If moreover $X$ is not a cylinder and if its regular part contains a periodic geodesic, then the Veech group is in fact a discrete subgroup of $\SL_2\BR$ \cite{Bowman}. In general, the stronger the conditions on the surface are, the more stringent are the restrictions on the possible Veech group, and the less is known about which groups arise. For example, the well-known problem of deciding if there is a compact translation surface whose Veech group consists of a single cyclic group generated by a hyperbolic element is still  open. The goal of this note is to prove that such translation surfaces exist in the class of finite area translation surfaces.

\begin{sat}\label{main}
There are finite area translation surfaces whose Veech group is a cyclic hyperbolic subgroup.
\end{sat}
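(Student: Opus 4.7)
The plan is to construct $X$ by removing a carefully chosen finite invariant set from a compact translation surface. First, fix a compact translation surface $X_0$ admitting an affine pseudo-Anosov $\phi_0 \in \Aff(X_0)$ with hyperbolic derivative $A := D\phi_0 \in \SL_2\BR$; any Veech surface together with a pseudo-Anosov in its affine group supplies such data. Let $S \subset X_0$ be a finite $\phi_0$-invariant set of regular points---for instance a finite union of periodic orbits of $\phi_0$---and set $X := X_0 \setminus S$. Then $X$ is a finite area translation surface.

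The next step is a reduction to a stabilizer problem. Every affine homeomorphism of $X$ is bi-Lipschitz in the flat metric and hence extends uniquely to a homeomorphism of the metric completion $X_0$; this extension is itself affine and preserves $S$ setwise. Conversely, every $f \in \Aff(X_0)$ with $f(S)=S$ restricts to an affine homeomorphism of $X$. Therefore
$$\Aff(X) = \{\, f \in \Aff(X_0) : f(S)=S\,\},$$
and the Veech group of $X$ is the image of this stabilizer in $\SL_2\BR$. By construction $\phi_0$ preserves $S$, so $\langle A \rangle$ is always contained in the Veech group of $X$. The remaining task is to choose $S$ so that no element of $\Aff(X_0)$ whose derivative lies outside $\langle A \rangle$ preserves $S$.

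This last condition is the technical heart of the proof and the main obstacle. Writing $\Gamma_0$ for the Veech group of $X_0$, there are countably many cosets of $\langle A \rangle$ in $\Gamma_0$ to rule out, and for each $g \in \Gamma_0$ the preimage of $g$ in $\Aff(X_0)$ is a coset of the translation subgroup of $X_0$, which is finite when $X_0$ is chosen of high enough genus. The intended strategy is to exploit the rich periodic dynamics of $\phi_0$: periodic orbits of $\phi_0$ are dense in $X_0$ and grow exponentially in number, while for any fixed $\psi \in \Aff(X_0)$ the condition $\psi(S)=S$ is an extremely rigid constraint. A counting or genericity argument should then produce a single periodic orbit---or a judiciously assembled union of orbits---that is preserved by $\phi_0$ but by none of the countably many unwanted candidates $\psi$.

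Once such an $S$ is found, the surface $X$ is finite area, is not a cylinder, and contains periodic geodesics in its regular part (inherited from the cylinder decompositions of $X_0$ in the stable/unstable directions of $\phi_0$). The discreteness theorem of Bowman recalled in the introduction then applies, and combined with the stabilizer computation above it identifies the Veech group of $X$ with the cyclic hyperbolic subgroup $\langle A \rangle$. The conceptual outline is therefore short; the difficulty is concentrated in the countable-avoidance step, where one must rule out every coset of $\langle A \rangle$ in $\Gamma_0$ simultaneously using the limited flexibility offered by choices of $\phi_0$-periodic points.
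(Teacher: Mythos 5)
Your reduction of the problem to a stabilizer computation is fine: for $S$ finite, affine homeomorphisms of $X_0\setminus S$ do extend to $X_0$ and $\Aff(X_0\setminus S)$ is exactly the setwise stabilizer of $S$. The gap is that the ``countable-avoidance step'' you defer to is not just technical but, as set up, cannot be carried out by the genericity argument you envisage. The collection of finite $\phi_0$-invariant sets of regular points is \emph{countable} (it is built from the countably many periodic orbits of $\phi_0$), and you must avoid countably many bad conditions, one for each unwanted coset; a Baire-category or cardinality argument needs an uncountable pool of candidates, and you have none. A priori every finite union of periodic orbits could be preserved by some unwanted $\psi$, and you offer no mechanism to rule this out. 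The natural fix --- take $S$ to be the full $\phi_0$-orbit of a point whose stabilizer in $\Aff(X_0)$ is trivial, so that $\psi(S)=S$ forces $\psi\in\langle\phi_0\rangle$ on the nose --- is unavailable on a compact $X_0$: an infinite orbit in a compact surface always accumulates, so $X_0\setminus S$ would not be an open subsurface. Note also that producing a finite-type surface (compact minus finitely many marked points) with Veech group $\langle A\rangle$ is essentially the well-known open problem quoted in the introduction, which is further evidence that the missing step is not a routine counting argument.

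The paper's construction is designed precisely to escape this obstruction: it starts from the non-compact, infinite-type Chamanara surface $X$, where the affine map $\phi$ has points with infinite \emph{discrete} orbits (Lemma \ref{lem1}, proved via binary expansions and the set $\CX\times\CY$ indexed by sequences with increments tending to infinity). The avoidance argument is then run over the \emph{uncountable} set $\CX\times\CY$ against the countable bad set $\CF=\{\zeta:\Stab_G(\zeta)\neq\Id\}$ (countable by Lemma \ref{lem2} plus countability of $\Aff(X)$), yielding a point $\zeta$ with trivial stabilizer and discrete orbit. Removing the whole orbit produces infinitely many analytically finite punctures that any affine map of $X'$ must permute after extending to $X$, and triviality of $\Stab_G(\zeta)$ then pins the Veech group down to $\langle d\phi\rangle$. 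If you want to salvage your outline, you should replace the compact $X_0$ by a finite-area surface on which some hyperbolic affine map admits an infinite discrete orbit with trivial stabilizer, and remove that entire orbit rather than a finite invariant set.
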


The surface $X'$ constructed to prove Theorem \ref{main} will arise as a subsurface of the well-known Chamanara surface $X$ \cite{Chamanara}. The later is a non-compact finite area translation surface with a large group of affine transformations. The basic idea of our construction is to choose an element $\phi\in\Aff(X)$ representing a hyperbolic element in the Veech group and find a point $\zeta\in X$ whose stabiliser in $\Aff(X)$ is trivial and whose orbit $\{\phi^n(\zeta)\vert n\in\BZ\}$ under $\phi$ is discrete. Once we have such a point, the surface $X'=X\setminus\{\phi^n(\zeta)\vert n\in\BZ\}$ does the job.

\medskip

We thank Kasra Rafi and Anja Randecker for telling us about the existence of the Chamanara surface, and we thank Reza Chamanara for coming up with it. 

\section{}

\subsection{The Chamanara surface}
In this section we recall the construction of the Chamanara surface $X$ (see \cite{Chamanara} and \cite{Anja} for details).  We start with the closed unit square $[0,1]\times[0,1]$ in $\BR^2$ and consider for $k=0,1,2,\dots$ the intervals
\begin{align*}
I_k^0=&[1-2^{-k},1-2^{-k-1}]\times\{0\}\subset[0,1]\times\{0\},\\
I_k^1=&[2^{-k-1},2^{-k}]\times\{1\}\subset\{1\}\times[0,1],\\
J_k^0=&\{0\}\times[1-2^{-k},1-2^{-k-1}]\subset\{0\}\times[0,1],\text{ and}\\
J_k^1=&\{1\}\times[2^{-k-1},2^{-k}]\subset\{1\}\times[0,1]
\end{align*}
contained in the boundary of the square. Let 
$$Q=[0,1]\times[0,1]\setminus\left(\{(0,1),(1,0)\}\cup\left(\cup_{k=0}^\infty(\D I_k^0\cup\D I_k^1\cup \D J_k^0\cup\D J_k^1\right)\right)$$
be the subset of the plane obtained by removing from the closed unit square the corners and the boundary points of the segments $I_k^0,I_k^1,J_k^0$ and $J_k^1$ for all $k$. The Chamanara surface $X$ is obtained from $Q$ when we identify for all $k$ the intervals $I_k^0$ and $I_k^1$ and the intervals $J_k^0$ and $J_k^1$ via translations. See figure \ref{surface} for a schematic representation of $X$.

\begin{figure}
    \begin{center}
    \centerline{\includegraphics[scale=0.8]{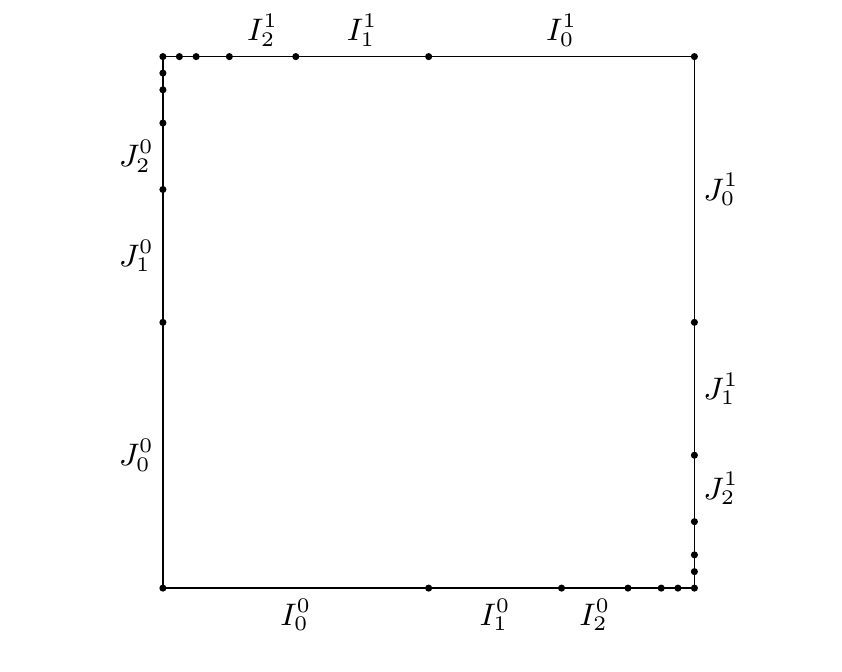}}
    \end{center}
    \caption{Glue by translations parallel segments of the same lengths.}\label{surface}
    \end{figure} 

\subsection{The map $\phi$}
The Chamanara surface $X$ has a non-trivial Veech group. In fact it is a Fuchsian group of second type. However, we will be mostly interested in the individual element
$$\phi:X\to X$$ 
which geometrically can be described as follows. Start with the map $L:\BR^2\to\BR^2$ given by $L(x,y)=\left(\frac 12x+\frac 12,2y\right)$. 
\begin{figure}
    \begin{center}
    \centerline{\includegraphics[scale=0.7]{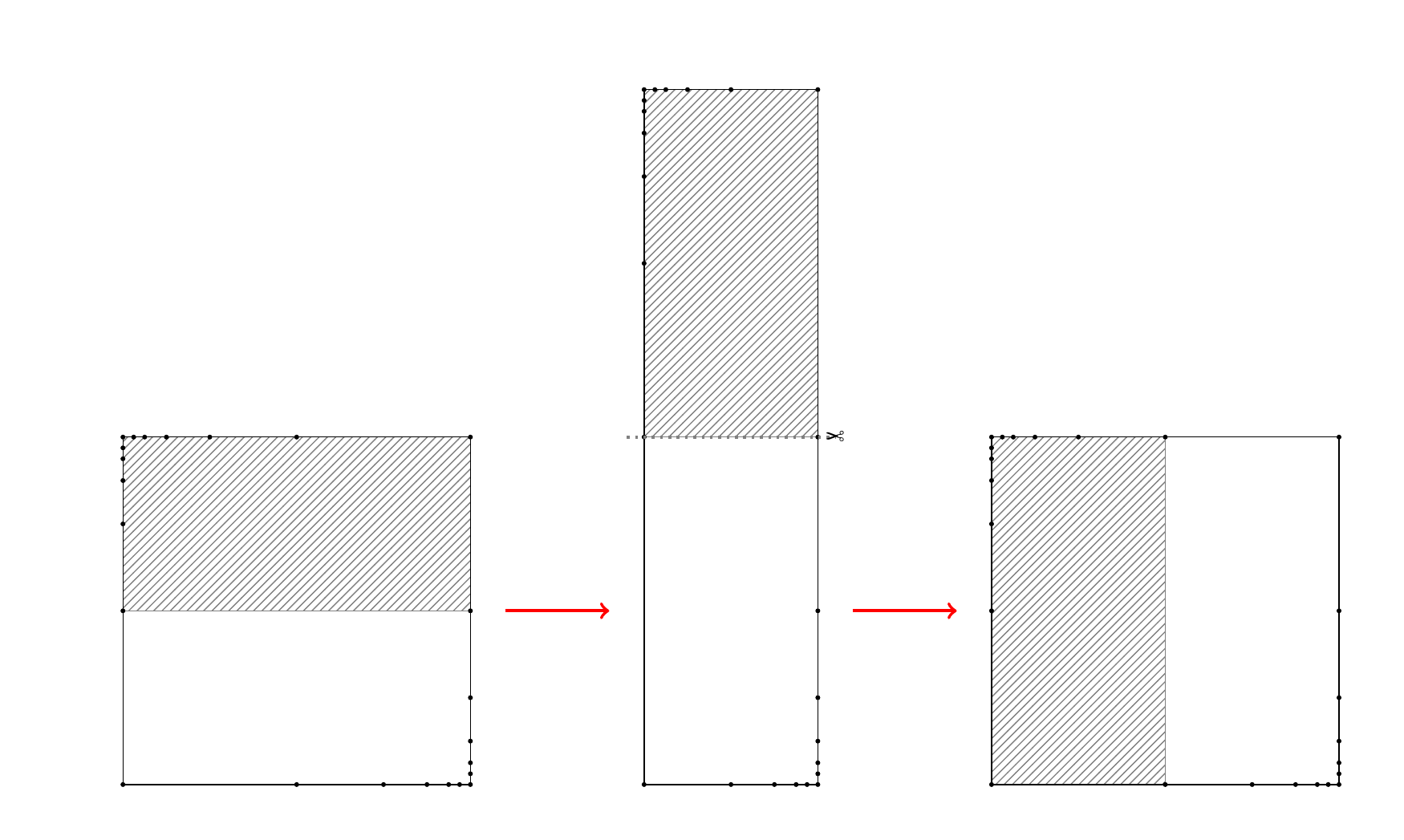}}
    \end{center}
    \caption{Map by $L$, cut along the dotted line, translate and reglue.}\label{map}
    \end{figure} 
Cut the image $L(Q)$ of $Q$ under this map into the two pieces
$$Q_1=L(Q)\cap\left[\frac 12,1\right]\times[0,1]\text{ and }Q_2=L(Q)\cap\left[\frac 12,1\right]\times[1,2]$$
and note that we can reassemble $Q$ from $Q_1$ and $Q_2$ if we identify the latter with $Q\cap\left[0,\frac 12\right]\times[0,1]$ via a translation (see figure \ref{map}). Here is a precise formula for $\phi$. Every $\zeta\in X$ is represented by a unique point $(x,y)\in Q\cap([0,1)\times[0,1))$. The point $\phi(\zeta)$ is just the point in $X$ represented by $(x',y')=\phi(x,y)$, and the later is given by:
\begin{align}\label{imagexy}
&x'=
\begin{cases}
\frac 12x+\frac 12  & \text{if }y\in[0,\frac 12)  \\
\frac 12x  & \text{if }y\in[\frac 12,1).   
\end{cases}
&y'=
\begin{cases}
2y  & \text{if }y\in[0,\frac 12)\\
2y-1  & \text{if }y\in[\frac 12,1)   
\end{cases}
\end{align}
The reader can check that the induced map $\phi:X\to X$ is well-defined and affine. The associated element of the Veech group of $X$ is 
$$d\phi=
\left(
\begin{array}{cc}
\frac 12  & 0  \\
0 & 2  
\end{array}
\right)$$
and thus hyperbolic.

\begin{bem}
Note that the inverse of $\phi$ is given by the same formula once we interchange the roles of $x$ and $y$. To be precise note that the self map of $Q$ given by $(x,y)\mapsto(y,x)$ induces a well defined homeomorphism $\tau:X\to X$ with $\phi^{-1}=\tau\circ\phi\circ\tau$.
\end{bem}

\subsection{Discrete orbits}
We are going to be interested in points $\zeta\in X$ whose orbit $\{\phi^n(\zeta)\vert n\in\BZ\}$ under $\phi$ is discrete. To that end consider the set
$$\CA=\left\{(s_n)_n\in\BN^\BN\vert \lim_n(s_{n+1}-s_n)=\infty\right\}$$
of all sequences $(s_n)$ of natural numbers such that the associated sequence of increments $(s_{n+1}-s_n)$ tends to infinity. Now consider the subsets
$$\CX=\left\{1-\sum_{n\in\BN}2^{-s_n}\middle\vert (s_n)\in\CA\right\}\text{ and }\CY=\left\{\sum_{n\in\BN}2^{-s_n}\middle\vert(s_n)\in\CA\right\}$$
of $(0,1)$. Note that $\CX\times\CY$ is contained in the open square $(0,1)\times(0,1)$ and that $\Phi(\CX\times\CY)=\CX\times\CY$. It follows that we can always use the formulas we gave above for $\Phi$ to compute the orbit of $(x,y)\in\CX\times\CY$.

\begin{lem}\label{lem1}
The orbit $(\phi^n(x,y))_{n\in\BZ}$ of any $(x,y)\in\CX\times\CY$ is discrete in $X$.
\end{lem}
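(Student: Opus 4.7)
The plan is to encode each point of $Q\cap([0,1)\times[0,1))$ by a biinfinite binary sequence on which $\phi$ acts as the left shift, and then read off the possible accumulation points of the orbit from the asymptotics of the shifted sequences. Given $(x,y)$ with binary expansions $0.x_1x_2\ldots$ and $0.y_1y_2\ldots$, define $\alpha\in\{0,1\}^{\BZ}$ by $\alpha_m=y_m$ for $m\geq 1$ and $\alpha_m=1-x_{1-m}$ for $m\leq 0$. A direct check from (\ref{imagexy}) shows that if $\alpha'$ encodes $\phi(x,y)$, then $\alpha'_m=\alpha_{m+1}$: the map $\phi$ doubles $y$ (shifting out $y_1$) and prepends the complementary digit $1-y_1$ to the binary expansion of $x$. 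Writing $(x_n,y_n)=\phi^n(x,y)$, this yields
$$y_n=\sum_{k\geq 1}\alpha_{n+k}\,2^{-k},\qquad 1-x_n=\sum_{k\geq 1}\alpha_{n+1-k}\,2^{-k}.$$

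For $(x,y)\in\CX\times\CY$, the 1's of $\alpha$ lie precisely at the positions $\{1-s_n\}_{n\geq 0}\cup\{t_n\}_{n\geq 0}$, where $(s_n),(t_n)\in\CA$ are the sequences defining $x$ and $y$. The hypothesis on $\CA$ ensures that the gaps between consecutive 1's of $\alpha$ tend to infinity in both directions (the single gap $t_0+s_0-1$ straddling the origin is harmless). Suppose now $\phi^{n_j}(x,y)\to(a,b)\in\overline{Q}$; since the orbit consists of distinct points in the bounded set $Q$, we must have $|n_j|\to\infty$. Let $p_n\geq 1$ and $q_n\geq 1$ be the positions of the leftmost 1-digit in $y_n$ and in $1-x_n$ respectively; the formulas above identify $p_n+q_n-1$ with the gap between the two consecutive 1's of $\alpha$ flanking position $n$, so $p_{n_j}+q_{n_j}\to\infty$. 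The same sparsity pushes the \emph{second} 1-digit arbitrarily far past the first, giving $y_n=2^{-p_n}(1+o(1))$ and $1-x_n=2^{-q_n}(1+o(1))$. After passing to a further subsequence, either $p_{n_j}$ stabilizes at some $p\geq 1$ (so $b=2^{-p}$) or $p_{n_j}\to\infty$ (so $b=0$); similarly $a=1-2^{-q}$ for some $q\geq 1$ or $a=1$.

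Because $p_{n_j}+q_{n_j}\to\infty$, at least one of $a=1$ or $b=0$ must hold, so the candidate accumulation points are $(1-2^{-q},0)$ with $q\geq 1$, $(1,2^{-p})$ with $p\geq 1$, and the corner $(1,0)$. These are exactly the endpoints of the intervals $I_k^0$ and $J_k^1$ and a corner, all of which were removed in the construction of $X$. Hence the orbit has no accumulation point in $X$, which is what we wanted. The main technical obstacle is nailing down the coding so that $\phi$ is literally a shift (which forces the asymmetric $1-x_{1-m}$ on the negative side) and then controlling the error terms carefully enough to legitimately pass from $y_n\sim 2^{-p_n}$ to a genuine limit of $y_{n_j}$; both steps amount to bookkeeping with the gaps in $(s_n)$ and $(t_n)$ provided by the defining condition of $\CA$.
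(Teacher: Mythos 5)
Your proof is correct and follows essentially the same route as the paper's: encode the point by binary expansions on which $\phi$ acts as a shift (prepending $1-y_1$ to $x$ and deleting $y_1$ from $y$), and use the divergent gaps between the 1's to force any accumulation point to be one of the removed boundary points $(1-2^{-j},0)$, $(1,2^{-j})$ or the corner $(1,0)$. The only real difference is organizational: your biinfinite coding handles the forward and backward orbits uniformly, whereas the paper treats the forward orbit and leaves the backward case to the reader.
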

\begin{proof}
Fix $(x,y)\in \CX\times\CY$. We will give a proof of discreteness of the forward orbit. The proof for the backward orbit is almost identical and we leave it to the interested reader. 

We identify points in the orbit of $(x,y)$ with points in $Q$. Suppose then that a subsequence $\phi^{n_k}(x,y)$ of the forward orbit of $(x,y)$ converges in the closed square $[0,1]\times[0,1]$ to some $(\check x,\check y)$. To prove the lemma, it suffices to show that $(\check x, \check y)\notin Q.$ In fact we will prove the following.

\begin{claim}
The point $(\check x, \check y)$ is either of the form $(1-2^{-j},0)$ or of the form $(2^{-j},1)$ for some $j\in \mathbb N\cup\{\infty\}$ where $2^{-\infty}=0$.
\end{claim}

To prove the claim we will use binary expansions of the coordinates. To be precise, we write $z=[z_1,z_2,z_3,\ldots]$ for $z=\sum_{i\geq 1}z_i2^{-i}\in[0,1]$. The coefficients are well defined unless $z$ is a dyadic rational, that is unless it can be written as a finite sum of powers of 2. 

Let 
\begin{align}
&x=[x_1,x_2,x_3,x_4,x_5,\ldots]\\ \nonumber
&y=[y_1,y_2,y_3,y_4,y_5,\ldots]
\end{align}
be the binary expansion of $(x,y)$ and note that we get from \eqref{imagexy} that the expansion of $(x',y')=\phi(x,y)$ is
\begin{align}\label{imagedyadic}
&x'=[1-y_1,x_1,x_2,x_3,x_4,\ldots]\\ \nonumber
&y'=[y_2,y_3,y_4,y_5,y_6,\ldots ]
\end{align}
Equation \eqref{imagedyadic} shows that $(\check x, \check y)$ depends only on $y$ and the sequence $(n_k)$.

Starting with the proof of the claim, observe that passing to a further subsequence we can in fact assume that the binary expansions of the coordinates of $\phi^{n_k}(x,y)$ converge to binary expressions 
 \begin{align}
\check x &=[\check x_1,\check x_2,\check x_3,\check x_4,\check x_5,\ldots]\\ \nonumber
\check y &=[\check y_1,\check y_2,\check y_3,\check y_4,\check y_5,\ldots]
\end{align} 
of the coordinates of $(\check x,\check y)$. Here, convergence of the binary expansions of  coordinates of $\phi^{n_k}(x,y)$ to that of $(\check x,\check y)$ simply means that given any $m\in \mathbb N$, from some $k_0$ on the first $m$ coefficients in the binary expansion
of both coordinates of $\phi^{n_k}(x,y)$  agree with those of $(\check x,\check y)$. 

Let $$K=\text{card}\{i\in \mathbb N \vert \,\check x_i=0\}+\text{card}\{i\in \mathbb N \vert\, \check y_i=1\}.$$ To prove the claim it suffices  to show that $K\leq 1$. In fact, if this were not that case, then there would be some $j$ so that the binary expansion of $y$ had infinitely many segments of the form $1,\underset{j \text{ zeros }}{\underbrace{0,0,\ldots,0}},1$. But this is impossible since by definition of the set $\CY$,  the distance between consecutive 1's in the binary expansion of $y$ goes to infinity. This finishes the proof of the Lemma. 
\end{proof}

\section{Proof of the main theorem}
In this section we prove Theorem \ref{main}. First we will need the following surely well-known lemma:

\begin{lem}\label{lem2}
Let $Y$ be a connected translation surface and $f:Y\to Y$ a non-trivial affine map. The fixed point set $\Fix(f)$ of $f$ consists of a (possibly empty or finite) union of isolated points and of a (again possibly empty or finite) union of isolated arcs.
\end{lem}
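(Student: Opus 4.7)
The argument is a case analysis on the globally defined linear part $A = Df \in \GL_2^+\BR$; this is well-defined because every transition function is a translation. In each flat chart the map $f$ has the form $z \mapsto Az + b$, and the fixed-point equation $(A - I)z = -b$ is linear, so on the regular part of $Y$ the local structure of $\Fix(f)$ is controlled by $\Ker(A - I)$, whose dimension is $0$, $1$, or $2$.

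I would first dispose of the case $A = I$. A direct chart-to-chart check using that the transition maps are translations shows that the translation vector $b$ is then independent of the chart, so $f$ is a globally defined translation; non-triviality forces $b \neq 0$ and hence $\Fix(f) = \emptyset$.

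Next, if $A \neq I$ and $\Ker(A - I) = \{0\}$, then $A - I$ is invertible and every flat chart contains at most one fixed point, so $\Fix(f)$ is discrete in the regular part. If instead $\dim \Ker(A - I) = 1$, let $v$ span this kernel; in every flat chart the fixed set is either empty or an affine line segment parallel to $v$. Each connected component of $\Fix(f)$ in the regular part is then an arc in direction $v$, and these arcs are isolated: if a sequence $(p_n) \subset \Fix(f)$ accumulated at $p$ from outside the component of $p$, then a small flat disk chart around $p$ would contain the unique line segment through $p$ in direction $v$, and eventually the $p_n$ would be forced onto this segment, contradicting that they lie in a different component.

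The main obstacle, and the point I expect to require the most care, is the behavior at singularities of the flat structure, where the chart analysis above does not apply verbatim. A fixed singularity is in any case topologically isolated from the other singular points, so at worst it contributes an isolated fixed point; in the $\dim \Ker(A - I) = 1$ setting it may also sit as the endpoint of finitely many fixed arcs emanating in direction $v$ along the various flat sectors meeting at the cone point. This bookkeeping is precisely what allows the isolated points and the isolated arcs to coexist, as permitted by the statement of the lemma.
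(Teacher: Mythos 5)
Your proof is correct, but it takes a genuinely different route from the paper's. You run a global case analysis on $\Ker(Df-\Id)$, exploiting that on a translation surface the linear part of an affine map is a single well-defined matrix; the fixed set is then governed in every chart at once by the same linear equation, which gives you strictly more than the lemma asks for (the point-versus-arc dichotomy is global, and all fixed arcs are parallel to $\Ker(Df-\Id)$). The paper instead argues synthetically and purely locally, with no case split: if $\zeta,\zeta'$ are fixed points in a flat ball small compared to a Lipschitz constant of $f$, the image of the straight segment $[\zeta,\zeta']$ is a straight segment with the same endpoints, hence equal to it and fixed pointwise; a third fixed point off that line would produce a fixed set with non-empty interior and force $f=\Id$ by analytic continuation. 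The paper's argument does not need the global well-definedness of $Df$, while yours makes the geometry of the arcs explicit. Two spots in your write-up deserve tightening, though neither is fatal. First, in the case $Df=\Id$ there is no globally defined ``translation by $b$'' on a general translation surface; the clean statement is that a fixed point would force $b=0$ in a chart around it, hence $f=\Id$ near that point and then everywhere by analytic continuation, contradicting non-triviality. Second, at a fixed cone point the assertion that it is ``at worst an isolated fixed point'' needs the observation that the developed map in each sector is linear (the apex develops to the origin and is fixed, so $b=0$ there); that is what actually rules out regular fixed points accumulating at the singularity when $\Ker(Df-\Id)=\{0\}$, and what confines the fixed set near the apex to finitely many rays in direction $\Ker(Df-\Id)$ otherwise. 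The paper is, admittedly, no more careful about the singular set than you are.
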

\begin{proof}
To begin with recall that the set of singularities of $Y$ is discrete and invariant under any affine transformation. We can thus forget the singularities and assume that every point in $Y$ was regular to begin with. Note also that it suffices to prove that every point $\zeta\in Y$ fixed under $f$ has a small neighborhood such that the set $B\cap\Fix(f)$ is equal to either $\{\zeta\}$ itself or to a straight arc. Continuing with the same notation, suppose that $\epsilon>0$ is small enough so that the metric ball $B(\zeta,\epsilon,Y)$ in $Y$, centered at $\zeta$ and with radius $\epsilon$ is isometric to a round $\epsilon$-ball in $\BR^2$. Let also $\lambda\ge 1$ be a Lipschitz constant for $f$ and suppose that $\zeta'\in B(\zeta,\frac 1{2\lambda}\epsilon,Y)$ is fixed by $f$. Let $[\zeta,\zeta']$ be the straight segment joining $\zeta$ and $\zeta'$ within $B(\zeta,\epsilon,Y)$. Then $f[\zeta,\zeta']$ is another such segment, which means that $f[\zeta,\zeta']=[\zeta,\zeta']$ and thus that $[\zeta,\zeta']$ is fixed pointwise. 

Suppose that $\zeta''\in B(\zeta,\frac 1{2\lambda}\epsilon,Y)$ is another fixed point. The argument we just used, proves that that $[\eta',\eta'']\subset\Fix(f)$ for any two $\eta'\in[\zeta,\zeta']$ and $\eta''\in[\zeta,\zeta'']$ sufficiently close to $\zeta$. Now, either $[\zeta,\zeta']$ and $[\zeta,\zeta'']$ are contained in a segment, or the union of all those segments $[\eta',\eta'']$ has non-empty interior. In the later case we get that the affine map $f$ fixes an open set. By analytic continuation this means that $f=\Id$, which contradicts our assumption.
\end{proof}

We can now prove our main result:

\begin{proof}[Proof of Theorem \ref{main}]
Recall the construction of the Chamanara surface $X$ given before; as above, we will identify points in $Q$ with the corresponding classes in $X$.

It follows for example from \cite{Chamanara,Anja} (or also from \cite{Bowman}) that the group $G$ of affine transformations of the Chamanara surface $X$ is countable. It thus follows from Lemma \ref{lem2} that the set $\CF=\{\zeta\in X\vert\Stab_G(\zeta)\neq\Id\}$ consists of countably many points and countably many straight segments. It thus follows that there are only countably many points $x\in(0,1)$ such that the vertical line $\{x\}\times(0,1)$ contains a non-degenerate interval contained in $\CF$. Since $\CX$ is uncountable, we can thus choose $x\in\CX$ such that $(\{x\}\times(0,1))\cap\CF$ is a countable union of points. Since $\CY$ is also uncountable we get that there is $\zeta=(x,y)\in\CX\times\CY\setminus\CF$.

Consider the complement $X'=X\setminus\{\phi^n(\zeta)\vert n\in\BZ\}$ of the orbit of $\zeta$ under $\phi$ in the Chamanara surface $X$. Since the orbit of $\zeta$ is discrete by Lemma \ref{lem1}, we get that $X'$ is an open subset of a flat surface, and thus a flat surface in its own right. Since the orbit of $\phi$ preserves $X'$, we get that $\phi$ belongs to the Veech group of $X'$. We claim that there is no other element. 

Well, the surface $X'$ has infinitely many ends. One is wild: the end of the Chamanara surface. And the others, the ones produced by removing the orbit, are analytically finite. Any affine map $\psi:X'\to X'$ must then map analytically finite ends to analytically finite ends. Since the analytically finite ends of our surface $X'$ have a neighborhood where the flat structure is isometric to $D\setminus\{0\}$ where $D$ is a small neighborhood of $0$, then we get that $\psi$ extends to a an affine transformation of $X$ which we again denote by $\psi:X\to X$. By construction, and this is is key, $\psi$ maps $\{\phi^n(\zeta)\vert n\in\BZ\}$ into itself. Indeed, suppose  that $\psi(\zeta)=\phi^n(\zeta)$. Then $\phi^{-n}\circ\psi\in\Stab_G(\zeta)$. By the choice of $\zeta$ this means that $\phi^{-n}\circ\psi=\Id$ which means that $\psi=\phi^n$. We have proved that the full group of automorphisms of $X'$ is the cyclic group $\langle\phi\rangle$. We are done.
\end{proof}

\end{document}